\newtheorem{theorem}{Theorem}[section]
\newtheorem{lemma}[theorem]{Lemma}
\newtheorem{proposition}[theorem]{Proposition}
\newtheorem{definition}[theorem]{Definition}
\numberwithin{equation}{section}
\begin{document}

\baselineskip=15.5pt

\title[Determinant line bundle of parabolic bundles]{Determinant
line  bundle on moduli space of parabolic bundles}

\author[I. Biswas]{Indranil Biswas}

\address{School of Mathematics, Tata Institute of Fundamental
Research, Homi Bhabha Road, Bombay 400005, India}

\email{indranil@math.tifr.res.in}

\subjclass[2000]{14F05, 14D20}

\keywords{Parabolic bundle, determinant bundle, Quillen metric}

\date{}

\begin{abstract}
In \cite{BR1}, \cite{BR2}, a parabolic determinant
line bundle on a moduli space of stable parabolic bundles
was constructed, along with a Hermitian structure on it.
The construction of the Hermitian structure was indirect: The
parabolic determinant line bundle was identified with the pullback
of the determinant line bundle on a moduli space of
usual vector bundles
over a covering curve. The Hermitian structure on the
parabolic determinant bundle was taken to be the pullback of the
Quillen metric on the determinant line bundle on the moduli space of
usual vector bundles. Here a direct construction of the
Hermitian structure is given.
For that we need to establish a version of
the correspondence between the stable parabolic bundles and the
Hermitian--Einstein connections in the context of conical metrics.
Also, a recently obtained parabolic analog of Faltings' criterion
of semistability plays a crucial role.
\end{abstract}

\maketitle

\section{Introduction}

Let $X$ be a compact connected Riemann surface of genus $g$
and $S\, \subset\, X$ a finite nonempty subset; if $g\,=\,0$,
then $\# S\, \geq\, 3$. Fix a positive integer $r$. For each
point $s\,\in\, S$, fix sequences
$$
r\,=\,r_{s,1}\,>\, r_{s,2}\,>\, \cdots\,
> \, r_{s,\ell_s} \,> \, r_{s,\ell_s+1}\,=\, 0\, .
$$
and $0\,\le\, \alpha_1(s) \,<
\,\alpha_2(s) \,<\, \cdots \,<\, \alpha_{\ell_s}(s)\,<\, 1$.
Let ${\mathcal M}_P$ be the moduli space of sable parabolic
vector bundles of rank $r$ and degree $d$
over $X$ associated to this data.

In \cite{BR1}, \cite{BR2}, generalizing the determinant
line bundle over the moduli space of usual vector bundles, a
parabolic determinant line bundle was constructed which is a
holomorphic Hermitian line bundle over ${\mathcal M}_P$.
Fix a ramified Galois covering
$$
f\, :\, Y\, \longrightarrow\, X
$$
such that ${\mathcal M}_P$ is identified with a moduli space
${\mathcal M}_Y(\Gamma)$ of
stable $\Gamma$--linearized vector bundles, where $\Gamma\,=\,
\text{Gal}(f)$, over $Y$.

It was shown that the parabolic determinant line bundle is
holomorphically identified with the usual determinant line bundle
over ${\mathcal M}_Y(\Gamma)$. The Hermitian structure on
the parabolic determinant line bundle was obtained by
pulling back the Quillen metric on the usual determinant
line bundle over ${\mathcal M}_Y(\Gamma)$.
Our aim here is to give a direct construction of the
Hermitian structure on the parabolic determinant line bundle.

Fix a conical metric $h$ on $X$. We show that any stable
parabolic vector bundle $E_*\, \in\, {\mathcal M}_P$ admits
a unique Hermitian--Einstein connection with respect to
$h$ (see Theorem \ref{thm1}). This Hermitian--Einstein structure
plays a crucial role in the construction of the
Hermitian structure on the parabolic determinant line bundle.

Faltings' semistability criterion says that a vector bundle
$E$ on a smooth complex projective curve $C$ is semistable
if and only if there is a vector bundle
$F\,\longrightarrow\, C$ such that
$H^i(C,\, E\otimes F)\, =\, 0$ for $i\,=\, 0,1$ \cite{Fa}.
This criterion has the following generalization for parabolic
vector bundles.

Fix $X$ and parabolic divisor $S$ as above. Fix a positive
integer $N$; all
the parabolic weights will be assumed to be integral multiples
of $1/N$. There is a parabolic vector bundle
$V^0_*$ with the following property: a parabolic vector bundle
$E_*$ is parabolic semistable if and only if there is
another parabolic vector bundle $F_*$ such that
$$
H^i(X,\, (E_*\otimes F_*\otimes V^0_*)_0)\, =\, 0
$$
for $i\,=\, 0,1$, where $(E_*\otimes F_*\otimes V^0_*)_0$ is the
vector bundle underlying the parabolic tensor product
$E_*\otimes F_*\otimes V^0_*$. (See \cite{Bi2}.)

The above parabolic vector bundle $V^0_*$ is the other key
input in the construction of the
Hermitian structure on the parabolic determinant line bundle.

\section{Preliminaries}\label{sec1}

Let $X$ be a compact connected Riemann surface.
Fix a finite subset
\begin{equation}\label{e1}
\emptyset \,\not=\, S\, \subset\, X\, .
\end{equation}
A \textit{parabolic vector bundle} over $X$ with $S$
as the parabolic divisor is a
vector bundle $E$ on $X$ equipped with a parabolic
structure over $S$, meaning for each point $s\,\in\, S$,
we have
\begin{itemize}
\item{} a filtration
\begin{equation}\label{e2}
E_s\,=\,E_{s,1}\,\supset\, E_{s,2}\,\supset\, \cdots\,
\supset \,E_{s,\ell_s} \,\supset\, E_{s,\ell_s+1}\,=\, 0\, .
\end{equation}
of subspaces of the fiber $E_s$, and

\item rational numbers
\begin{equation}\label{e3}
0\,\le\, \alpha_1(s) \,<
\,\alpha_2(s) \,<\, \cdots \,<\, \alpha_{\ell_s}(s)\,<\, 1\, .
\end{equation}
\end{itemize}

The sequence in \eqref{e2} is called the \textit{quasiparabolic 
filtration}, and the sequence in \eqref{e3} is called the
\textit{parabolic weights}.
The points of $S$ are called \textit{parabolic points}.
For notational convenience, a vector bundle $E$ with
a parabolic structure will be denoted by $E_*$.

The \textit{parabolic degree} of $E_*$ is defined to be
$$
\text{par-deg}(E_*)\,:=\, \text{degree}(E)+\sum_{s\in S}
\sum_{i=1}^{\ell_s}\alpha_{\ell_s}(s)\cdot \dim E_{s,i}/E_{s,i+1}\, .
$$
Any subbundle $F$ of $E$ has an induced parabolic structure.
We recall that a parabolic vector bundle $E_*$ is called
\textit{semistable} if for all proper subbundles $F\, \subset\, E$
of positive rank, the inequality
$$
\frac{\text{par-deg}(F_*)}{\text{rank}(F_*)}\, \leq\,
\frac{\text{par-deg}(E_*)}{\text{rank}(E_*)}
$$
holds, where $F_*$ is the parabolic vector bundle defined by the
induced parabolic structure on $F$. If
the strict inequality
$$
\frac{\text{par-deg}(F_*)}{\text{rank}(F_*)}\, <\,
\frac{\text{par-deg}(E_*)}{\text{rank}(E_*)}
$$
holds then $E_*$ is called \textit{stable}.

A semistable parabolic vector bundle is called \textit{polystable}
if it is a direct sum of stable parabolic vector bundles.

Fix a positive integer $r$, and also fix an integer $d$. Let
${\mathcal M}_P$ denote the moduli space of stable parabolic
vector bundles of rank $r$ and degree $d$ with
a fixed type of quasiparabolic filtration and fixed parabolic
weights. (See \cite{MS} for the construction of ${\mathcal M}_P$
as well as some of its properties.) 

We will assume that $S$ and the parabolic data are such that
$\dim {\mathcal M}_P\, >\, 0$. Note that this
assumption rules out the
case where $X\,=\, {\mathbb C}{\mathbb P}^1$ and $\# S\,=\, 1$.
Indeed, if $X\,=\, {\mathbb C}{\mathbb P}^1$ with $\# S\,=\, \{s\}$,
then using the Grothendieck's theorem that any vector bundle over 
${\mathbb C}{\mathbb P}^1$ splits into a direct sum of line bundles
it follows that any semistable parabolic vector bundle $E_*$ 
must be of the following form:
$$
E\, =\,L^{\oplus r}\, ,
$$
where $L\, \longrightarrow\, {\mathbb C}{\mathbb P}^1$ is a line
bundle, and the quasiparabolic filtration is the trivial
filtration
$$
0\,\subset\, E_s\, .
$$

A theorem due to Faltings says that a vector bundle $E$ over
$X$ is semistable if and only if there is another vector bundle
$E'$ such that $E\otimes E'$ is cohomologically trivial, meaning
$$
H^0(X,\, E\otimes E')\,=\, 0\, =\, H^1(X,\, E\otimes E')
$$
\cite{Fa}. This criterion can be generalized to the context of
parabolic vector bundles in the following way. Fix a positive
integer $N$. Consider parabolic vector bundles with parabolic
weights integral multiples of $1/N$ (with arbitrary quasiparabolic
structure and rank).

\begin{lemma}[\cite{Bi2}]\label{lem1}
Then there is a parabolic vector bundle
$V^0_*$ with the following property:
A parabolic vector bundle $E_*$ is semistable if and only
if there is a parabolic vector bundle $E'_*$ such that the vector
bundle $(E_*\otimes E'_*\otimes V^0_*)_0$ underlying
the parabolic tensor product $E_*\otimes E'_*\otimes V^0_*$ is
cohomologically trivial.
\end{lemma}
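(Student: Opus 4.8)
The plan is to reduce the statement to the ordinary Faltings criterion on a covering curve by means of the correspondence between parabolic bundles and equivariant bundles. First I would fix a ramified Galois covering $f\colon Y\to X$, with $\Gamma=\mathrm{Gal}(f)$, whose ramification indices over the points of $S$ are all equal to $N$; as recalled in the introduction (see \cite{BR1}, \cite{BR2}), this sets up an equivalence between parabolic bundles on $(X,S)$ with all weights in $\frac1N\mathbb Z$ and $\Gamma$--equivariant bundles on $Y$. Write $W$, $W'$ for the $\Gamma$--equivariant bundles corresponding to $E_*$, $E'_*$. Under this equivalence three facts are used: (i) the parabolic tensor product corresponds to the ordinary tensor product of equivariant bundles; (ii) the bundle underlying a parabolic bundle is the invariant direct image, so that $H^i(X,(F_*)_0)\cong H^i(Y,W_F)^\Gamma$ for the equivariant bundle $W_F$ corresponding to $F_*$, the identification coming from exactness of $f_*$ (a finite morphism) together with exactness of the functor of $\Gamma$--invariants in characteristic zero; and (iii) $E_*$ is parabolic semistable if and only if $W$ is semistable as an ordinary bundle, the non-trivial implication being that the maximal destabilizing subbundle of $W$, being canonical, is $\Gamma$--invariant.

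The key device is the choice of $V^0_*$. I would take $V^0_*$ to be the parabolic bundle corresponding to the $\Gamma$--equivariant bundle $\mathcal O_Y\otimes_{\mathbb C}\mathbb C[\Gamma]$, where $\mathcal O_Y$ carries its tautological equivariant structure, $\mathbb C[\Gamma]$ the left regular representation, and $\Gamma$ acts diagonally. This is a fixed parabolic bundle, depending only on $N$. For any finite-dimensional $\Gamma$--module $V$ one has $V\otimes\mathbb C[\Gamma]\cong\mathbb C[\Gamma]^{\oplus\dim V}$, hence $(V\otimes\mathbb C[\Gamma])^\Gamma\cong V$ because $\mathbb C[\Gamma]^\Gamma$ is one-dimensional. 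Applying this with $V=H^i(Y,W\otimes W')$ and using (i), (ii) gives
$$
H^i\bigl(X,\,(E_*\otimes E'_*\otimes V^0_*)_0\bigr)\;\cong\;\bigl(H^i(Y,\,W\otimes W')\otimes\mathbb C[\Gamma]\bigr)^\Gamma\;\cong\;H^i(Y,\,W\otimes W')\, .
$$
Thus the cohomological triviality of $(E_*\otimes E'_*\otimes V^0_*)_0$ on $X$ is equivalent to the cohomological triviality of $W\otimes W'$ on $Y$.

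With this reformulation the two directions follow from Faltings' theorem on $Y$. For the ``if'' direction, if $W\otimes W'$ is cohomologically trivial then, forgetting the equivariant structure, $W$ is semistable by \cite{Fa}, whence $E_*$ is parabolic semistable by (iii). For the ``only if'' direction, suppose $E_*$ is parabolic semistable, so that $W$ is semistable; by \cite{Fa} there is an ordinary bundle $U$ on $Y$ with $W\otimes U$ cohomologically trivial. To make the auxiliary bundle equivariant I would replace $U$ by $W'=\bigoplus_{g\in\Gamma}g^*U$ with its natural $\Gamma$--linearization. Using the equivariant isomorphisms $g^*W\cong W$ one gets $W\otimes g^*U\cong g^*(W\otimes U)$, and since each $g$ is an automorphism of $Y$ every summand has vanishing cohomology; hence $W\otimes W'$ is cohomologically trivial, and the parabolic bundle $E'_*$ corresponding to $W'$ does the job.

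The main obstacle is the ``only if'' direction: the ordinary Faltings criterion produces an auxiliary bundle with no equivariance, whereas the parabolic statement requires $E'_*$, equivalently a $\Gamma$--equivariant $W'$. The averaging construction $W'=\bigoplus_{g}g^*U$ resolves this, the point being that cohomological triviality is preserved summand by summand. The one genuinely new ingredient beyond the dictionary and \cite{Fa} is the regular-representation choice of $V^0_*$, which is precisely what converts the $\Gamma$--invariant cohomology computing $H^*(X,(\,\cdot\,)_0)$ into the full cohomology on $Y$ to which Faltings' criterion applies.
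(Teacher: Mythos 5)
Your proof is correct and takes essentially the same route as the paper and its reference \cite{Bi2}: you choose the same $V^0_*$, namely the parabolic bundle corresponding to the $\Gamma$--linearized bundle ${\mathcal O}_Y\otimes_{\mathbb C}{\mathbb C}(\Gamma)$ built from the regular representation, you use the same key identity $H^i(X,\,(E_*\otimes E'_*\otimes V^0_*)_0)\,\cong\, H^i(Y,\,W\otimes W')$ (which the paper itself invokes in the proof of Lemma \ref{lem-d}, citing \cite{BH} and \cite{Bi2}), and you reduce to Faltings' criterion on $Y$, handling the lack of equivariance of the auxiliary bundle by the standard averaging $W'=\bigoplus_{g\in\Gamma}g^*U$. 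The only cosmetic discrepancy is the ramification convention (you require index $N$ where the paper writes $N-1$, reflecting the convention of counting $e-1$ rather than $e$), which does not affect the argument.
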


It should be emphasized that such a parabolic vector bundle $V^0_*$ 
can be explicitly constructed.
We recall a construction of a parabolic vector bundle $V^0_*$
that satisfies the above condition.

Fix a Galois covering
\begin{equation}\label{f}
f\,:\, Y\, \longrightarrow\, X
\end{equation}
such that
\begin{itemize}
\item $f$ is ramified exactly over $S$, and

\item the ramification index of each point in $f^{-1}(S)$ is
$N-1$, where $N$ is the fixed integer such that all the parabolic
weights are integral multiples of $1/N$.
\end{itemize}
See \cite[p. 26, Proposition 1.2.12]{Na} for the
existence of $f$ satisfying these conditions. Let
\begin{equation}\label{gl}
\Gamma\, :=\, \text{Gal}(f)
\end{equation}
be the Galois group for the covering $f$.
There is a natural bijective correspondence between the
following two classes:
\begin{enumerate}
\item all $\Gamma$--linearized vector bundles on $Y$, and

\item the parabolic vector bundles over $X$ for which the
parabolic divisor in contained in $S$ and all the
parabolic weights are integral multiples of $1/N$.
\end{enumerate}
(See \cite{Bi1}.)

Consider the trivial vector bundle over $Y$
\begin{equation}\label{hW}
\widetilde{V}\, :=\, {\mathcal O}_Y\otimes_{\mathbb C}
{\mathbb C}(\Gamma)\, ,
\end{equation}
where ${\mathbb C}(\Gamma)$ is the group algebra of $\Gamma$
defined in \eqref{gl}. The action of $\Gamma$ on $Y$ produces
an action of $\Gamma$ on ${\mathcal O}_Y$.
The natural action of $\Gamma$
on ${\mathbb C}(\Gamma)$ and the action of $\Gamma$
on ${\mathcal O}_Y$ together define a 
$\Gamma$--linearization on the vector bundle $\widetilde{V}$
in (\ref{hW}). Let
\begin{equation}\label{V}
V^0_*\, \longrightarrow\, X
\end{equation}
be the parabolic vector bundle over $X$ corresponding
to the $\Gamma$--linearized vector bundle $\widetilde{V}$.
This parabolic vector bundle $V^0_*$ satisfies the condition
in Lemma \ref{lem1}. (See \cite{Bi2}, \cite{BH}.)

\section{Conical metric on $X$}\label{sec.cm}

Fix a positive integer $N$.

Let $S$ be the subset in \eqref{e1}. Set
$$
X'\,:=\, X\setminus S\, .
$$

Let ${\mathbb D}\, :=\,\{z\,\in\, {\mathbb C}\, \mid\,
\vert z \vert\, <\, 1\}$ be the unit disk. For a point
$x\, \in\, X$, a \textit{holomorphic
coordinate} around $x$ is a holomorphic embedding
$$
\varphi\, :\, {\mathbb D}\, \longrightarrow\, X
$$
such that $\varphi(x)\,=\, 0$.

A \textit{conical metric} on $X$ of order $N$ is a Hermitian
metric $h$ on the holomorphic tangent bundle $TX'$ satisfying
the following condition: For each point $x\,\in\, D$,
there is a holomorphic coordinate around $x$
$$
\varphi\, :\, {\mathbb D}\, \longrightarrow\, X
$$
such that
$$
h\vert_{\mathbb D}\, =\, f(z)\frac{dz\otimes
d\overline{z}}{|z|^{2(N-1)/N}}\, ,
$$
where $f\, :\, {\mathbb D} \, \longrightarrow\,
{\mathbb R}^+$ is a smooth function; $\varphi({\mathbb D})$
is identified with ${\mathbb D}$ using $\varphi$.

Let $E_*$ be a parabolic vector bundle over $X$. Let
$E$ be the vector bundle underlying the parabolic bundle $E_*$.

A \textit{Hermitian structure} on $E_*$ is a Hermitian structure
$H$ on $E\vert_{X'}$ satisfying the following condition:
Take any point $s\, \in\, S$, and take any holomorphic section
$\sigma$ of $E$ defined around $s$. If $\sigma(s)$ is a nonzero
element of $E_{s,i}\, \subset\, E_s$ (see \eqref{e2}), then
$$
\Vert s\Vert_H \,=\, f(z) \vert z\vert^{\alpha_i(s)}
$$
(see \eqref{e3} for $\alpha_i(s)$), where $z$ is a
holomorphic coordinate around $s$, and $f$ is a smooth
function with values in positive real numbers.

Fix a conical metric of order $N$ on $X$. Let $\omega$ be
the corresponding K\"ahler form on $X'$.

Let $E_*$ be a parabolic vector bundle over $X$ with the
property that all the parabolic weights of $E_*$ are
integral multiples of $1/N$.

\begin{definition}\label{HE}
{\rm A Hermitian structure $H$ on $E_*$ is called}
Hermitian--Einstein {\rm if the curvature of the Chern
connection on $E\vert_{X'}$ for $H$ is of the form}
$$
\lambda\cdot {\rm Id}_E\otimes \omega\, ,
$$
{\rm where $\lambda$ is some constant positive real number.}
\end{definition}

\begin{theorem}\label{thm1}
Let $E_*$ be a stable parabolic vector bundle over $X$
such that all the parabolic weights of $E_*$ are
integral multiples of $1/N$. Then $E_*$ admits a
Hermitian--Einstein structure. If $H$ and $H_1$ are
two Hermitian--Einstein structures on $E_*$, then
$$
H_1\, =\, c\cdot H\, ,
$$
where $c$ is a constant real positive number.
\end{theorem}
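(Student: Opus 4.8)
The plan is to transport the problem to the Galois cover $f\,:\,Y\,\to\,X$ of Section \ref{sec1}, where parabolic bundles become $\Gamma$--linearized bundles and the conical metric becomes a genuine smooth Kähler metric. First I would fix a $\Gamma$--invariant smooth Kähler metric $\omega_Y$ on $Y$ whose push-forward under $f$ is (a positive constant multiple of) the chosen conical metric $h$ on $X$; such a metric exists because near a ramification point, where $f$ is locally modeled by $w\,\mapsto\, z\,=\,w^N$, a smooth invariant metric $|dw|^2$ has the local form $|z|^{-2(N-1)/N}|dz|^2$, which is exactly the order-$N$ conical model of Section \ref{sec.cm}. Under the correspondence of \cite{Bi1} recalled above, the stable parabolic bundle $E_*$ corresponds to a $\Gamma$--linearized bundle $\widetilde{E}$ on $Y$, and stability of $E_*$ is equivalent to $\Gamma$--stability of $\widetilde{E}$; in particular $\widetilde{E}$ is polystable as an ordinary bundle.

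For existence I would invoke the Hermitian--Einstein correspondence on the compact Riemann surface $Y$ in its $\Gamma$--equivariant form. Run Donaldson's heat flow (equivalently, the Uhlenbeck--Yau continuity method) for the metric $\omega_Y$, starting from a $\Gamma$--invariant initial Hermitian metric on $\widetilde{E}$. Since $\omega_Y$, the initial data, and the holomorphic structure are all $\Gamma$--invariant, invariance is preserved along the flow, and because $\widetilde{E}$ is polystable the flow converges to a Hermitian--Einstein metric $\widetilde{H}$ on $\widetilde{E}$, which is therefore $\Gamma$--invariant. (Alternatively one produces any Hermitian--Einstein metric and then averages over the finite group $\Gamma$, using the non-positive curvature of the space of such metrics to extract an invariant barycenter.) The invariant metric $\widetilde{H}$ descends to a Hermitian metric $H$ on $E|_{X'}$. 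The remaining verification is local near the parabolic points: the isotropy weights of the $\Gamma$--linearization at a point of $f^{-1}(s)$ encode exactly the parabolic weights $\alpha_i(s)$, so the descended metric $H$ satisfies the growth condition $\Vert\sigma\Vert_H\,=\,f(z)|z|^{\alpha_i(s)}$ defining a Hermitian structure on $E_*$, and the equivariant equation $\Lambda_{\omega_Y}F_{\widetilde{H}}\,=\,\text{const}\cdot\mathrm{Id}$ pushes down to the condition that the Chern connection of $H$ has curvature $\lambda\cdot\mathrm{Id}_E\otimes\omega$ of Definition \ref{HE}, with $\lambda$ determined by $\text{par-deg}(E_*)$ and $r$.

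For uniqueness I would argue from downstairs to upstairs. Given two Hermitian--Einstein structures $H$ and $H_1$ on $E_*$, their pullbacks are two $\Gamma$--invariant Hermitian--Einstein metrics on $\widetilde{E}$ with respect to $\omega_Y$. The self-adjoint positive endomorphism $\phi\,=\,\widetilde{H}^{-1}\widetilde{H}_1$ relating them is covariantly constant, by the standard Bochner/maximum-principle argument on the compact surface $Y$ obtained from integrating the second-order equation satisfied by $\phi$; hence $\phi$ lies in the automorphism algebra of $\widetilde{E}$. Its $\Gamma$--invariance together with the simplicity forced by stability of $E_*$ then pins $\phi$ down to $c\cdot\mathrm{Id}$ for a positive constant $c$, and descending gives $H_1\,=\,c\cdot H$.

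The main obstacle I expect is the local matching in the existence step: one must verify carefully that a $\Gamma$--invariant smooth metric on $\widetilde{E}$ pushes forward to a metric with exactly the prescribed parabolic weight behaviour, and conversely that the conical Kähler form $\omega$ of Definition \ref{HE} is the push-forward of the smooth $\omega_Y$, so that the two Hermitian--Einstein conditions literally coincide. This is the precise content of the promised correspondence ``in the context of conical metrics,'' and it rests on an explicit comparison of the local model $w\,\mapsto\, w^N$ for $f$ with the weight and curvature normalizations; once this dictionary is in place, both existence and uniqueness reduce to the classical statements on the smooth compact curve $Y$.
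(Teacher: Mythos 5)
Your proposal follows essentially the same route as the paper: pass to the Galois cover $f\,:\,Y\,\longrightarrow\,X$, where the conical metric pulls back to a smooth metric, use polystability of the $\Gamma$--linearized bundle (the paper cites \cite[Proposition 4.1]{BBN}) together with Donaldson's theorem \cite{Do} to get a Hermitian--Einstein metric, make it $\Gamma$--invariant, descend, and deduce uniqueness from the fact that stability forces all parabolic automorphisms to be scalars. The only cosmetic difference is how invariance is achieved: you propose heat-flow invariance or a geodesic barycenter, while the paper notes that uniqueness of the Hermitian--Einstein \emph{connection} makes it $\Gamma$--invariant and then simply averages the metric as $\sum_{\gamma\in\Gamma}\gamma^*H_W$ --- the same idea in simpler form.
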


\begin{proof}
Let $f\, :\, Y\, \longrightarrow\, X$ be the covering
in \eqref{f}. The pullback of the (fixed) conical metric
on $X$ defines a Hermitian structure on
$f^{-1}(X\setminus S)$. From the definition of the
conical metric it follows that this Hermitian metric
on $f^{-1}(X\setminus S)$ extends to a Hermitian metric
on $Y$. Let $\widetilde{\omega}$
be the K\"ahler form on $Y$ associated to this
Hermitian structure on $Y$.

Let
$$
W\, \longrightarrow\, Y
$$
be the $\Gamma$--linearized vector bundle corresponding
to the parabolic vector bundle $E_*$. Since $E_*$ is
parabolic stable, it follows that the corresponding
$\Gamma$--linearized vector bundle
$W$ is polystable \cite[p. 349, Proposition 4.1]{BBN}.
Consequently, $W$ admits a Hermitian--Einstein structure
\cite{Do}.
The Hermitian--Einstein structure is not unique, but the
Hermitian--Einstein connection is unique. From the
uniqueness of the Hermitian--Einstein connection it follows
immediately that the action of $\Gamma$ on $W$ preserves
the Hermitian--Einstein connection. Consequently, for any
Hermitian--Einstein structure $H_W$ on $W$, the Hermitian
form
$$
H'_W\, :=\, \sum_{\gamma\in\Gamma} \gamma^* H_W
$$
is Hermitian--Einstein. Clearly, the action of $\Gamma$ on $W$
preserves the Hermitian form $H'_W$. Therefore, $H'_W$ descends
to a Hermitian structure on $E\vert_{X\setminus S}$. It is now
straight--forward to check that this Hermitian structure on 
$E\vert_{X\setminus S}$ is a Hermitian structure on the parabolic
vector bundle $E_*$.

{}From the definition of a Hermitian--Einstein structure on $E_*$
it follows that any two Hermitian--Einstein forms on $E_*$ differ
by an automorphism of the parabolic vector bundle $E_*$.
Since $E_*$ is parabolic stable, all parabolic automorphisms
are constant scalar multiplications. Hence any two
Hermitian--Einstein forms on $E_*$ differ by multiplication
with a constant real number. This completes the proof of the
theorem.
\end{proof}

\section{The determinant line bundle}\label{sec2}

Let ${\mathcal E}\, \longrightarrow\, X\times T$ be a holomorphic
vector bundle, where $T$ is a complex manifold. We will consider
$\mathcal E$ as a holomorphic family of vector bundles over $X$ 
parametrized by $T$. Let
\begin{equation}\label{p}
p\, :\, X\times T\, \longrightarrow\, T
\end{equation}
be the projection. The direct images $R^0p_*\mathcal E$
and $R^1 p_*\mathcal E$ are coherent analytic sheaves on $T$.
Therefore,
$$
\det (R^i p_*\mathcal E)\, :=\, \bigwedge\nolimits^{\rm top}
R^i p_*\mathcal E\, ,
$$
$i\,=\, 0\, ,1$, are holomorphic line bundles over $T$ (see
\cite[Ch. V, \S~6]{Ko} for the construction of the determinant
line bundle of a coherent analytic sheaf). If $T$ is algebraic,
and $\mathcal E$ is an algebraic vector bundle, then the
line bundle $\det (R^i p_*\mathcal E)$ is also algebraic.

The \textit{determinant} of the family $\mathcal E$ is
defined to be the line bundle
\begin{equation}\label{d}
d({\mathcal E})\, :=\, \det (R^0 p_*\mathcal E)^*\otimes
\det (R^1 p_*\mathcal E)\, \longrightarrow\, T\, .
\end{equation}

Let
\begin{equation}\label{e5}
{\mathcal E}_*\, \longrightarrow\, X\times T
\end{equation}
be a family of parabolic vector bundles of fixed quasiparabolic
type and fixed parabolic weights. Let
\begin{equation}\label{px}
p_X\, :\, X\times T\, \longrightarrow\, X
\end{equation}
be the natural projection. Consider the parabolic
vector bundle $V^0_*$ constructed in \eqref{V}.
So $p^*_XV^0_*$ is a constant family of parabolic vector bundles
parametrized by $T$. Let
\begin{equation}\label{e4}
{\mathcal E}_*\otimes p^*_XV^0_*\, \longrightarrow\, X\times T 
\end{equation}
be the family of parabolic vector bundles obtained by taking
the parabolic tensor product. Let
$$
({\mathcal E}_*\otimes p^*V^0_*)_0\, \longrightarrow\, X\times T 
$$
be the family of vector bundles underlying the family of
parabolic vector bundles in \eqref{e4}.

\begin{definition}\label{def1}
{\rm The} parabolic determinant bundle {\rm for the family
${\mathcal E}_*$ is defined to be the line 
bundle}
$$
d(({\mathcal E}_*\otimes p^*V^0_*)_0)\, \longrightarrow\, T
$$
{\rm (see \eqref{d}). The parabolic determinant bundle for
${\mathcal E}_*$ will be denoted by}
$pd({\mathcal E}_*)$.
\end{definition}

Let
${\mathcal M}_P$ be a moduli space of stable parabolic
vector bundles over $X$ of rank $r$ and degree $d$ with
a fixed type of quasiparabolic filtration and fixed parabolic
weights. In general, there is no universal parabolic vector
bundle over $X\times {\mathcal M}_P$. However, every point
$z\, \in\, {\mathcal M}_P$ has an open neighborhood $U_z$ in
\'etale topology such that there is a
universal parabolic vector bundle over $X\times U_z$. Hence
there is an open neighborhood $U'_z$ of $z$ in
analytic topology such that there is a
universal parabolic vector bundle over $X\times U'_z$.

Using the locally defined universal parabolic vector bundles
on ${\mathcal M}_P$,
we can construct locally defined parabolic determinant bundles.

Fix a point $x_0\,\in\, X$. Let ${\mathcal E}_*$ be 
a locally defined (in either \'etale or analytic 
topology) universal parabolic vector bundles over $X\times 
U$. The vector bundle over $U$ obtained by 
restricting the underlying vector bundler ${\mathcal E}$
to $\{x_0\}\times U$ will be denoted by
${\mathcal E}_{x_0}$.

Assume that $T$ is connected. Hence the function
${\mathcal M}_P\,\longrightarrow\, \mathbb Z$
that sends any $E_*\, \in\, {\mathcal M}_P$ to
\begin{equation}\label{chi}
\chi\,:=\, \dim H^0(X,\, E)- \dim H^1(X,\, E)
\end{equation}
is a constant one.

\begin{lemma}\label{lem2}
The locally defined line bundles
$$
pd({\mathcal E}_*)^{\otimes r}\otimes
(\bigwedge\nolimits^r {\mathcal E}_{x_0})^{\otimes \chi}
$$
(see Definition \ref{def1}) patch together naturally to
define an algebraic line bundle over ${\mathcal M}_P$.
\end{lemma}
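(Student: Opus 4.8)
The plan is to show that the locally defined line bundle $pd(\mathcal E_*)^{\otimes r}\otimes(\bigwedge\nolimits^r\mathcal E_{x_0})^{\otimes\chi}$ is, up to a canonical isomorphism, independent of the choice of the local universal parabolic bundle $\mathcal E_*$, and that the resulting isomorphisms are compatible on overlaps; this furnishes descent data, and hence a line bundle on $\mathcal M_P$. First I would describe the ambiguity in the local universal bundles. Since the members of $\mathcal M_P$ are stable, they are parabolic simple, so their only parabolic endomorphisms are the scalars. Therefore, if $\mathcal E_*$ and $\mathcal E'_*$ are two universal parabolic bundles over a common $X\times U$, the direct image along $p$ (see \eqref{p}) of the sheaf of relative parabolic homomorphisms from $\mathcal E_*$ to $\mathcal E'_*$ is a line bundle $L$ on $U$, and the evaluation homomorphism produces a canonical isomorphism
$$
\mathcal E'_*\,\cong\,\mathcal E_*\otimes p^*L\, .
$$
Thus any two local universal bundles over the same base differ by tensoring with the pullback of a line bundle from the base, and it suffices to show that the line bundle in the statement is unchanged under $\mathcal E_*\mapsto\mathcal E_*\otimes p^*L$.

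Next I would compute how the two factors transform under this twist. For the first factor, compatibility of the parabolic tensor product with twisting by a bundle pulled back from the base gives $(\mathcal E_*\otimes p^*L)\otimes p_X^*V^0_*\,\cong\,(\mathcal E_*\otimes p_X^*V^0_*)\otimes p^*L$, so, writing $\mathcal F\,:=\,(\mathcal E_*\otimes p_X^*V^0_*)_0$, the underlying family is replaced by $\mathcal F\otimes p^*L$. By the projection formula $R^ip_*(\mathcal F\otimes p^*L)\,\cong\,(R^ip_*\mathcal F)\otimes L$, together with the behaviour of the determinant of a coherent sheaf under a line-bundle twist, this yields
$$
pd(\mathcal E_*\otimes p^*L)\,=\,pd(\mathcal E_*)\otimes L^{\otimes(-\chi)}\, ,
$$
where $\chi$ is the (constant, by connectedness of $\mathcal M_P$) Euler characteristic of the fibres of $\mathcal F$. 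For the second factor, the restriction of $p^*L$ to $\{x_0\}\times U$ is $L$, so $\mathcal E'_{x_0}\,\cong\,\mathcal E_{x_0}\otimes L$; since $\mathcal E_{x_0}$ has rank $r$, taking top exterior powers gives $\bigwedge\nolimits^r\mathcal E'_{x_0}\,\cong\,(\bigwedge\nolimits^r\mathcal E_{x_0})\otimes L^{\otimes r}$.

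Combining the two, the $r$-th power of the first factor contributes $L^{\otimes(-r\chi)}$ and the $\chi$-th power of the second contributes $L^{\otimes r\chi}$, so the contributions cancel and the line bundle of the statement is canonically independent of the choice of local universal bundle; the exponents $r$ and $\chi$ are chosen exactly to force this cancellation, which is the computational heart of the proof. The same computation, applied with $U$ a point, shows that the residual scalar automorphisms $c\in\mathbb C^*$ of a universal bundle act on the combined line bundle by $c^{-r\chi}\cdot c^{r\chi}=1$; this triviality of the $\mathbb C^*$-action, combined with the canonicity of the evaluation isomorphisms, is the point I expect to require the most care, since it is exactly what forces the local isomorphisms to satisfy the cocycle condition on triple overlaps and hence to constitute effective descent data. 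Finally, each local line bundle is algebraic — the universal bundles exist in the \'etale topology and are algebraic, and the determinant of cohomology of an algebraic family is algebraic (as noted after \eqref{d}) — and the gluing isomorphisms are algebraic, so by descent the resulting line bundle on $\mathcal M_P$ is algebraic, which completes the proof.
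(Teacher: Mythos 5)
Your proposal is correct and follows essentially the same route as the paper's own proof: stability makes the parametrized bundles parabolic simple, so two local universal bundles differ by the pullback of the line bundle $L\,=\,\phi_*\bigl(Hom({\mathcal E}_*,{\mathcal E}'_*)\bigr)$ via the evaluation pairing, and the projection formula gives the twists $L^{\otimes(-\chi)}$ on $pd({\mathcal E}_*)$ and $L^{\otimes r}$ on $\bigwedge\nolimits^r{\mathcal E}_{x_0}$, which cancel exactly in the stated combination. Your two supplementary remarks --- that the relevant $\chi$ is the Euler characteristic of the fibres of $({\mathcal E}_*\otimes p_X^*V^0_*)_0$ (sharper than a literal reading of \eqref{chi}) and that the triviality of the residual ${\mathbb C}^*$-action gives the cocycle condition on triple overlaps --- correctly make explicit what the paper's ``patch together naturally'' leaves implicit.
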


\begin{proof}
Let $E_*$ be a stable parabolic vector bundle. Take any
automorphism $\tau$ of the underlying vector bundle $E$ that
preserves the quasiparabolic filtrations. Since $E_*$ is stable,
we know that $\tau\,=\, \lambda\cdot \text{Id}_E$ for some
$\lambda\,\in\, {\mathbb C}^*$. Using this it follows that
if ${\mathcal E}_*$ and ${\mathcal E}'_*$ are two universal parabolic 
vector bundles over $X\times U$, then there is a natural
line bundle $L\, \longrightarrow\, U$ and a
canonical isomorphism
\begin{equation}\label{e6}
{\mathcal E}'_*\,=\, {\mathcal E}_*\otimes f^*L\, ,
\end{equation}
where $\phi\, :\, X\times U\, \longrightarrow\, U$ is the 
projection. In fact, we may take
$$
L\, :=\, \phi_*(Hom({\mathcal E}_*\, ,{\mathcal E}'_*))
$$
(here $Hom$ is the sheaf of parabolic homomorphisms).
In this case, the isomorphism in \eqref{e6} is given by the
natural pairing
$$
Hom({\mathcal E}_*\, ,{\mathcal E}'_*)\otimes {\mathcal E}_*\, 
\longrightarrow\,{\mathcal E}'_*\, .
$$

{}From \eqref{e6} and the projection formula
it follows that 
\begin{equation}\label{e7}
pd({\mathcal E}_*)\, =\, pd({\mathcal E}'_*)\otimes
L^{\otimes \chi}\, ,
\end{equation}
where $\chi$ is defined in \eqref{chi}.
On the other hand, from \eqref{e6},
\begin{equation}\label{e8}
\bigwedge\nolimits^r {\mathcal E}'_{x_0}
\,=\, (\bigwedge\nolimits^r {\mathcal E}_{x_0})
\otimes L^{\otimes r}\, .
\end{equation}
{}From \eqref{e7} and \eqref{e8},
$$
pd({\mathcal E}_*)^{\otimes r}\otimes (\bigwedge\nolimits^r 
\bigwedge\nolimits^r 
{\mathcal E}_{x_0})^{\otimes \chi}\,=\,
pd({\mathcal E}'_*)^{\otimes r}\otimes (\bigwedge\nolimits^r 
\bigwedge\nolimits^r {\mathcal E}'_{x_0})^{\otimes \chi}\, .
$$
This completes the proof of the lemma.
\end{proof}

\begin{definition}\label{D}
{\rm Let}
$$
{\mathcal D}\, \longrightarrow\,{\mathcal M}_P
$$
{\rm be the holomorphic line bundle obtained from Lemma \ref{lem2}.}
\end{definition}

There is a parabolic determinant line bundle on ${\mathcal M}_P$;
see \cite{BR1} and \cite{BR2} for its construction.

\begin{lemma}\label{lem-d}
The holomorphic
line bundle ${\mathcal D}$ in Definition \ref{D} coincides
with the parabolic determinant line bundle.
\end{lemma}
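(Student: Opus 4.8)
The plan is to reduce the statement to the identification proved in \cite{BR1}, \cite{BR2} by establishing two things locally over ${\mathcal M}_P$: that the parabolic determinant bundle $pd({\mathcal E}_*)$ agrees with the ordinary determinant $d({\mathcal W})$ of the family of $\Gamma$--linearized bundles on $Y$ corresponding to ${\mathcal E}_*$, and that the correction factor $(\bigwedge^r {\mathcal E}_{x_0})^{\otimes\chi}$ of Lemma \ref{lem2} matches the correction term used on the covering curve. The whole argument turns on the special nature of the twisting bundle $V^0_*$ of \eqref{V}: it corresponds to the trivial bundle $\widetilde V = {\mathcal O}_Y\otimes_{\mathbb C}{\mathbb C}(\Gamma)$ of \eqref{hW}, which carries the \emph{regular} representation of $\Gamma$.

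First I would set up the correspondence in families. Let ${\mathcal W}\to Y\times U$ be the family of $\Gamma$--linearized bundles corresponding to a universal ${\mathcal E}_*$ over $X\times U$ (see \cite{Bi1}). Under the correspondence the parabolic tensor product goes to the tensor product of $\Gamma$--linearized bundles, so ${\mathcal E}_*\otimes p^*_X V^0_*$ corresponds to ${\mathcal W}\otimes({\mathcal O}_{Y\times U}\otimes_{\mathbb C}{\mathbb C}(\Gamma))$ with diagonal $\Gamma$--action (the linearization on ${\mathcal W}$ together with the left regular action on ${\mathbb C}(\Gamma)$), and $({\mathcal E}_*\otimes p^*_X V^0_*)_0$ is the $\Gamma$--invariant part of the direct image of this bundle under the finite map $f\times\mathrm{id}_U$. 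The key step is then the cohomology identification: since taking $\Gamma$--invariants commutes with cohomology over the finite map $f$, one gets for a single $W$ that
$$
H^i(X,\, (E_*\otimes V^0_*)_0)\,\cong\,\bigl(H^i(Y,\, W)\otimes_{\mathbb C}{\mathbb C}(\Gamma)\bigr)^\Gamma\, .
$$
Because ${\mathbb C}(\Gamma)$ is the regular representation, for any finite--dimensional $\Gamma$--module $M$ there is a canonical isomorphism $(M\otimes_{\mathbb C}{\mathbb C}(\Gamma))^\Gamma\cong M$ (for instance $m\otimes\gamma\mapsto\gamma^{-1}m\otimes\gamma$, followed by projection onto the invariant line of ${\mathbb C}(\Gamma)$); hence $H^i(X,(E_*\otimes V^0_*)_0)\cong H^i(Y,W)$. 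This is exactly the mechanism underlying the parabolic Faltings criterion of Lemma \ref{lem1}. Performing this in families and using cohomology and base change to see that the isomorphism is canonical and globalizes over $U$ yields $R^i p_*(({\mathcal E}_*\otimes p^*_X V^0_*)_0)\cong R^i q_*{\mathcal W}$, where $q$ is the projection $Y\times U\to U$, and therefore $pd({\mathcal E}_*)\cong d({\mathcal W})$.

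Next I would match the correction factors and descend. Choosing the base point $x_0\in X\setminus S$ so that $f$ is \'etale over $x_0$ and fixing $y_0\in f^{-1}(x_0)$, the invariant direct image identifies ${\mathcal E}_{x_0}\cong{\mathcal W}_{y_0}$, so $\bigwedge^r{\mathcal E}_{x_0}\cong\det{\mathcal W}_{y_0}$. Combined with $pd({\mathcal E}_*)\cong d({\mathcal W})$, this shows that the locally defined bundle of Lemma \ref{lem2} equals $d({\mathcal W})^{\otimes r}\otimes(\det{\mathcal W}_{y_0})^{\otimes\chi}$, with $\chi$ the fibrewise Euler characteristic appearing in \eqref{e7} (which under the present identification is $\chi(Y,W)$). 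This is precisely the local model of the determinant line bundle on the moduli space ${\mathcal M}_Y(\Gamma)$ of $\Gamma$--linearized bundles; since both sides descend to ${\mathcal D}$ on ${\mathcal M}_P\cong{\mathcal M}_Y(\Gamma)$ (Definition \ref{D}), it follows that ${\mathcal D}$ is the determinant line bundle on ${\mathcal M}_Y(\Gamma)$. As the parabolic determinant line bundle is holomorphically isomorphic to that same determinant line bundle by \cite{BR1}, \cite{BR2}, the lemma follows.

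I expect the main obstacle to be the family version of the cohomology identification: one must check that $(M\otimes{\mathbb C}(\Gamma))^\Gamma\cong M$ is realized by a morphism of coherent sheaves compatible with base change, so that the fibrewise isomorphism upgrades to an isomorphism of the direct image sheaves, and hence of the determinant lines, over $U$; and then that these local isomorphisms are compatible with the transition data of \eqref{e6}--\eqref{e8} so as to descend to a global isomorphism over ${\mathcal M}_P$. A secondary point requiring care is verifying that the normalization used here, namely the power $r$ and the exponent $\chi$, agrees with the normalization implicit in the determinant line bundle on ${\mathcal M}_Y(\Gamma)$ used in \cite{BR1}, \cite{BR2}.
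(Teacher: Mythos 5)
Your proposal is correct and follows essentially the same route as the paper: the paper's proof also rests on the identification $H^i(X,\,({\mathcal E}_*\otimes p^*V^0_*)_0)\,=\,H^i(Y,\,W)$ (its equation \eqref{i}, cited from \cite{BH} and \cite{Bi2}), from which it declares the coincidence of ${\mathcal D}$ with the parabolic determinant line bundle straightforward, the latter having been identified in \cite{BR1}, \cite{BR2} with the determinant bundle over ${\mathcal M}_Y(\Gamma)$. The only difference is that you rederive \eqref{i} from scratch via the regular-representation isomorphism $(M\otimes{\mathbb C}(\Gamma))^\Gamma\,\cong\, M$ and spell out the family/base-change and correction-factor bookkeeping (correctly noting that the relevant Euler characteristic is $\chi(Y,W)$), details the paper leaves implicit.
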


\begin{proof}
Take any parabolic vector bundle $E_*\, \in\, {\mathcal M}_P$.
Let
$$
W\, \longrightarrow\, Y
$$
be the $\Gamma$--linearized vector bundle
corresponding to $E_*$, where $Y$ is the Galois
covering in \eqref{f}. For $i\,=\, 0\, ,1$, we have
\begin{equation}\label{i}
H^i(X,\, ({\mathcal E}_*\otimes p^*V^0_*)_0)\,=\,
H^i(Y,\, W)
\end{equation}
\cite[p. 252, (15)]{BH} (in \cite[p. 252, (15)]{BH} it is
proved for $i\,=\,0$, but the proof is identical for $i\,=\,1$;
see \cite[p. 327, (8)]{Bi2}). Using \eqref{i} it is
straight--forward to check that ${\mathcal D}$ coincides
with the parabolic determinant line bundle.
\end{proof}

\section{Hermitian structures}

\subsection{Hermitian structure on $V^0$}

Let $V^0$ denote the vector bundle underlying the parabolic
bundle $V^0_*$ in \eqref{V}.
We will first construct a Hermitian structure on the
restriction of $V^0$ to
\begin{equation}\label{xp}
X'\,:=\, X\setminus S\, \subset\, X\, .
\end{equation}

Consider the group $\Gamma$ in \eqref{gl}.
Let $h_\Gamma$ be the inner product on the group
algebra ${\mathbb C}(\Gamma)$ defined by
$$
h_\Gamma(\sum_{z\in\Gamma} c_z\cdot z\, ,\sum_{z\in\Gamma}
d_z\cdot z)\,=\, \sum_{z\in\Gamma} c_z\overline{d_z}\, .
$$
This inner product
$h_\Gamma$ defines a Hermitian structure on the trivial
vector bundle $\widetilde{V}\,=\, {\mathcal O}_Y\bigotimes_{\mathbb C}
{\mathbb C}(\Gamma)$ in \eqref{hW}. The action of the Galois
group $\Gamma\,=\, \text{Gal}(f)$ on $\widetilde{V}$ preserves
this Hermitian structure.
The pullback $f^*(V^0\vert_{X'})$ is identified
with the restriction of $\widetilde{V}$ to $f^{-1}(X')\,\subset\,
Y$, where $X'$ is the open subset in \eqref{xp}. Consequently,
there is a unique Hermitian structure on $V^0\vert_{X'}$
such that the identification of $f^*(V^0\vert_{X'})$ with
$\widetilde{V}\vert_{f^{-1}(X')}$ is an isometry.

\begin{definition}\label{h-v}
{\rm Let $h_0$ denote the Hermitian structure on $V^0\vert_{X'}$ 
constructed above.}
\end{definition}

It is straight--forward to check that $h_0$ is a Hermitian structure 
on the parabolic vector bundle $V^0_*$.

The parabolic vector bundle $V^0_*$ is polystable. Hence it has
a Hermitian--Einstein structure (see Theorem \ref{thm1}). The following
proposition is obtained by comparing the constructions of the
Hermitian structure $h_0$ (see Definition \ref{h-v}) and
Hermitian--Einstein structure in Theorem \ref{thm1}.

\begin{proposition}\label{prop.-i}
The Hermitian structure $h_0$ on the polystable parabolic vector
bundle $V^0_*$ is a Hermitian--Einstein form.
\end{proposition}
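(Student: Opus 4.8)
The plan is to exploit the fact that the Hermitian structure $h_0$ is built, by construction, as the descent of a $\Gamma$-invariant metric on $\widetilde{V}$, and to show that this descended metric is precisely the one produced by the uniqueness argument in Theorem \ref{thm1}. First I would unravel the two constructions and compare them on the covering curve $Y$. On $Y$, the vector bundle $\widetilde{V}\,=\,{\mathcal O}_Y\otimes_{\mathbb C}{\mathbb C}(\Gamma)$ carries the metric $h_\Gamma$ coming from the pullback conical metric together with the fixed inner product on ${\mathbb C}(\Gamma)$. Since ${\mathbb C}(\Gamma)$ is given a \emph{fixed} inner product (independent of the point of $Y$) and ${\mathcal O}_Y$ is the trivial line bundle, the Chern connection of this metric on $\widetilde{V}$ is just the trivial (product) connection tensored with the identity, so its curvature vanishes identically. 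In particular the curvature is trivially of Hermitian--Einstein type on $Y$ with $\lambda\,=\,0$, and the metric is $\Gamma$-invariant by the very definition of the $\Gamma$-linearization on $\widetilde{V}$.

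Next I would invoke the correspondence used in the proof of Theorem \ref{thm1}: a $\Gamma$-invariant Hermitian--Einstein metric on the $\Gamma$-linearized bundle over $Y$ descends to a Hermitian--Einstein structure on the corresponding parabolic bundle over $X$, with respect to the conical metric $\omega$. The key point is that the curvature condition on $Y$ and the curvature condition in Definition \ref{HE} match up under this correspondence: the flat metric on $\widetilde V$ over $Y$ descends to a metric on $V^0\vert_{X'}$ whose Chern connection has curvature proportional to $\mathrm{Id}\otimes\omega$. Since by Definition \ref{h-v} the metric $h_0$ is exactly the descent of $h_\Gamma$ — it is the unique metric making $f^*(V^0\vert_{X'})\,\cong\,\widetilde V\vert_{f^{-1}(X')}$ an isometry — it follows that $h_0$ is the Hermitian--Einstein structure on $V^0_*$. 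The uniqueness clause of Theorem \ref{thm1} (up to a positive scalar on each stable summand) then identifies $h_0$ with the Hermitian--Einstein form.

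The main obstacle I anticipate is the careful bookkeeping of the curvature constant under descent. On $Y$ the pulled-back conical metric is smooth and the metric on $\widetilde V$ is flat, giving $\lambda\,=\,0$ upstairs; but Definition \ref{HE} requires a \emph{positive} constant $\lambda$ downstairs. The resolution is that the relevant curvature comparison must be taken with respect to the parabolic structure: the descended Chern connection on $V^0\vert_{X'}$ acquires curvature from the way the $\Gamma$-action twists the trivialization near the ramification points, and this is exactly what renders the curvature a nonzero multiple of $\omega$ on $X'$. I would therefore need to verify the local model near a point of $S$ — computing the curvature of the descended metric in a conical coordinate $z$ where $h\,=\,f(z)\,dz\otimes d\overline z/|z|^{2(N-1)/N}$ — and confirm that the resulting constant $\lambda$ is positive, matching the value forced by the degree of $V^0_*$ via the Hermitian--Einstein relation $\lambda\cdot\mathrm{rank}\,=\,2\pi\,\mathrm{par\text{-}deg}(V^0_*)/\mathrm{vol}$. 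This local curvature computation, rather than the global descent argument, is where the real work lies.
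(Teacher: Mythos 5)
Your first two paragraphs reproduce exactly the paper's (implicit) argument: the paper offers no proof beyond the remark that the proposition follows by comparing Definition \ref{h-v} with the construction in the proof of Theorem \ref{thm1}, and that comparison is precisely what you carry out --- the metric induced by $h_\Gamma$ on $\widetilde V$ is flat, hence Hermitian--Einstein on $Y$, and it is already $\Gamma$-invariant, so the averaging step $\sum_{\gamma\in\Gamma}\gamma^*H_W$ from the proof of Theorem \ref{thm1} is vacuous and the descended metric is $h_0$ itself. Up to the end of your second paragraph the proposal is correct and, in fact, complete.

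Your third paragraph, however, is not the ``real work'' but a genuine error. Curvature is local, and $f$ is \'etale over $X'\,=\,X\setminus S$: locally on $X'$ the metric $h_0$ is isometric, via a local section of $f$, to the constant metric on the trivial bundle $\widetilde V$, so the curvature of its Chern connection vanishes \emph{identically} on $X'$. No curvature can be ``acquired from the way the $\Gamma$-action twists the trivialization near the ramification points''; that twisting is recorded in the parabolic weights of $V^0_*$, not in the curvature on the punctured surface. The very formula you quote settles the constant against you: since $\widetilde V$ is trivial of degree zero, $\text{par-deg}(V^0_*)\,=\,\deg(\widetilde V)/|\Gamma|\,=\,0$, so the relation $\lambda\cdot\text{rank}\,=\,2\pi\,\text{par-deg}(V^0_*)/\text{vol}$ forces $\lambda\,=\,0$, not $\lambda\,>\,0$. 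The word ``positive'' in Definition \ref{HE} must be read as a slip for ``real'': taken literally it would already contradict Theorem \ref{thm1}, which asserts existence of Hermitian--Einstein structures on stable parabolic bundles of arbitrary parabolic degree, the constant being determined by the parabolic slope. So the local computation you propose near $S$ would simply return $0$, the obstacle you anticipate does not exist, and your proof should end after the descent argument, with $\lambda\,=\,0$.
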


\subsection{Hermitian structure on $\mathcal D$}

Fix a positive integer $N$ such that all the parabolic weights
in the parabolic data associated to ${\mathcal M}_P$ are
integral multiples of $1/N$. 
Fix a conical metric $h$ of order $N$ on $X$.

Let $E_*\, \in\, {\mathcal M}_P$ be a stable parabolic
vector bundle. Let $H$ be a Hermitian--Einstein form
on $E_*$. From Theorem \ref{thm1} we know that $H$ is
determined uniquely up to a constant scalar.

The Hermitian form $h_0$ on $V^0\vert_{X'}$ (see
\eqref{h-v}) and the Hermitian--Einstein form $H$
on $E_*$
together define a Hermitian--Einstein structure on
$(E\otimes V^0)\vert_{X'}$, where $X'$ is
defined in \eqref{xp}. This Hermitian structure
on $(E\otimes V^0)\vert_{X'}$ defines a
Hermitian structure on the parabolic tensor product
$E_*\otimes V^0_*$. Let $\widehat{H}$ denote the
Hermitian structure on the parabolic vector bundle
$E_*\otimes V^0_*$ given by the above
Hermitian structure on $(E\otimes V^0)\vert_{X'}$.

Let $(E_*\otimes V^0_*)_0$ be the vector bundle
underlying the parabolic vector bundle
$E_*\otimes V^0_*$.

The Hermitian structure $\widehat{H}$ and
the conical metric $h$ together define a
Hermitian structure on the vector space
$H^0(X,\, (E_*\otimes V^0_*)_0)$.

The Hermitian structure $\widehat{H}$
induces a Hermitian structure on
$$
C^\infty(X;\, (E_*\otimes V^0_*)_0\otimes
\Omega^{0,1}_X)\, .
$$
The restriction of it to the orthogonal complement
of the image of the Dolbeault operator for the
holomorphic vector bundle $(E_*\otimes V^0_*)_0$
\begin{equation}\label{dol}
\overline{\partial}_{(E_*\otimes V^0_*)_0}\, :\,
C^\infty(X;\, (E_*\otimes V^0_*)_0)\,
\longrightarrow\, C^\infty(X;\, (E_*\otimes V^0_*)_0
\otimes \Omega^{0,1}_X)
\end{equation}
defines a Hermitian structure on the vector space
$H^1(X,\, (E_*\otimes V^0_*)_0)$.

Let
\begin{equation}\label{l}
\Delta_{(E_*\otimes V^0_*)_0}\, :=\,
(\overline{\partial}_{(E_*\otimes V^0_*)_0})^*
\overline{\partial}_{(E_*\otimes V^0_*)_0}
\end{equation}
be the Laplacian of the operator in \eqref{dol}.

Consider the complex line
$$
pd(E_*)\, :=\, \bigwedge\nolimits^{\text{top}}H^0(X,\, (E_*
\otimes V^0_*)_0)^* \otimes \bigwedge\nolimits^{\text{top}}
H^1(X, \,(E_*\otimes V^0_*)_0)
$$
(compare it with Definition \ref{def1}).
The above inner products on $H^0(X,\, (E_*\otimes
V^0_*)_0)$ and $H^1(X,\, (E_*\otimes V^0_*)_0)$ together produce
an inner product on $pd(E_*)$. Using the Quillen's construction,
we modify this inner product on $pd(E_*)$ using the eigenvalues of
the Laplacian $\Delta_{(E_*\otimes V^0_*)_0}$ defined in \eqref{l}
(see \cite{Qu}). This procedure
produces a Hermitian structure on the holomorphic line bundle
$pd({\mathcal E}_*)\, \longrightarrow\, T$ (see Definition
\ref{def1}). Hence we get a Hermitian structure on the holomorphic
line bundle
$$
{\mathcal D}\, \longrightarrow\,{\mathcal M}_P
$$
constructed in Definition \ref{D}.

\begin{definition}\label{def2}
{\rm The above Hermitian structure on the holomorphic line
bundle ${\mathcal D}\, \longrightarrow\,{\mathcal M}_P$ will be
denoted by $H_{pQ}$.}
\end{definition}

Consider the covering $f$ in \eqref{f}. As noted in the proof
of Theorem \ref{thm1}, the pullback of the conical metric $h$
by $f$ produces a Hermitian metric on $Y$. As before, the
K\"ahler form on $Y$ associated to this Hermitian metric
will be denoted by $\widetilde{\omega}$.

Let
$$
W\, \longrightarrow\, Y
$$
be the $\Gamma$--linearized vector bundle
corresponding to $E_*$, where $Y$ is the
Galois covering in \eqref{f}. Let
$$
\overline{\partial}_W\, :\, C^\infty(Y;\, W)
\,\longrightarrow\, C^\infty(Y;\, W\otimes
\Omega^{0,1}_Y)
$$
be the Dolbeault operator for the holomorphic
vector bundle $W$. Since the vector
bundle $W$ is polystable, it has a Hermitian--Einstein
structure. Using this Hermitian--Einstein structure
and the K\"ahler form $\widetilde{\omega}$ on $Y$ we
define the Laplacian
$$
\Delta_{W}\, :=\,\overline{\partial}^*_W
\overline{\partial}_W\, .
$$
It can be shown that the eigenvalues of $\Delta_{W}$,
along with their multiplicities, coincide with those of
the operator $\Delta_{(E_*\otimes V^0_*)_0}$ constructed
in \eqref{l}. Indeed,
$$
C^\infty(Y;\, W) ~\,~\,~\,\text{~and~}\,~\,~\,~
C^\infty(Y;\, W\otimes\Omega^{0,1}_Y)
$$
are identified with $C^\infty(X;\,(E_*\otimes V^0_*)_0
\otimes\Omega^{0,1}_X)$ and
$C^\infty(X;\, (E_*\otimes V^0_*)_0)$ respectively,
and these identifications preserve the inner products.
Furthermore, these identifications take the differential
operator $\overline{\partial}_{(E_*\otimes V^0_*)_0}$ to 
$\overline{\partial}_W$. Consequently, these identification
takes $\Delta_{(E_*\otimes V^0_*)_0}$ to $\Delta_{W}$. Hence
the eigenvalues of $\Delta_{W}$, along with their
multiplicities, coincide with those of the operator
$\Delta_{(E_*\otimes V^0_*)_0}$.

Therefore, we have the following proposition:

\begin{proposition}\label{prop2}
The holomorphic isomorphism in Lemma \ref{lem-d} between
${\mathcal D}$ and the parabolic determinant line bundle
takes in Hermitian structure $H_{pQ}$ in Definition \ref{def2}
to the Hermitian structure on the parabolic determinant line bundle.
\end{proposition}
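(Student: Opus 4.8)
The plan is to compare the two Hermitian structures fiber by fiber over ${\mathcal M}_P$ by recognizing both as instances of the Quillen construction and then matching their two constituent pieces: the $L^2$ inner product on the relevant determinant of cohomology, and the zeta-regularized analytic torsion that modifies it. By the construction recalled in the introduction (see \cite{BR1}, \cite{BR2}), the Hermitian structure on the parabolic determinant line bundle at a point $E_*\,\in\,{\mathcal M}_P$ is the Quillen metric on the usual determinant line bundle $d(W)$ over ${\mathcal M}_Y(\Gamma)$, where $W\,\longrightarrow\,Y$ is the $\Gamma$--linearized bundle corresponding to $E_*$; this Quillen metric is the $L^2$ inner product on $\bigwedge\nolimits^{\mathrm{top}} H^0(Y,W)^*\otimes\bigwedge\nolimits^{\mathrm{top}} H^1(Y,W)$ --- induced by the Hermitian--Einstein metric on $W$ and the K\"ahler form $\widetilde\omega$ --- multiplied by the torsion factor built from the nonzero spectrum of $\Delta_W$. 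On the other side, $H_{pQ}$ (Definition \ref{def2}) is built in exactly the same way from the $L^2$ inner product on $pd(E_*)$ coming from $\widehat H$ and $h$, modified by the torsion factor of $\Delta_{(E_*\otimes V^0_*)_0}$ from \eqref{l}.

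The first step is to identify the underlying complex lines: by \eqref{i} the isomorphism of Lemma \ref{lem-d} sends $\bigwedge\nolimits^{\mathrm{top}} H^i(X,(E_*\otimes V^0_*)_0)$ to $\bigwedge\nolimits^{\mathrm{top}} H^i(Y,W)$ for $i\,=\,0,1$, so the fibers $pd(E_*)$ and $d(W)$ are canonically identified. The second step is to match the $L^2$ inner products under this identification, and here the isometric identifications recorded just before this proposition do the work: the function spaces $C^\infty(Y;W)$ and $C^\infty(Y;W\otimes\Omega^{0,1}_Y)$ are identified isometrically with their counterparts on $X$, the operator $\overline\partial_W$ corresponds to $\overline\partial_{(E_*\otimes V^0_*)_0}$, and hence the harmonic subspaces --- which represent the cohomology on each side --- correspond isometrically, yielding the desired isometry of $L^2$ inner products on the determinant lines. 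The third step is to match the torsion factors: since the identification carries $\Delta_{(E_*\otimes V^0_*)_0}$ to $\Delta_W$, the two operators share the same spectrum with multiplicities, so their zeta-regularized determinants, and hence the two torsion factors, coincide. Combining the three steps, the isomorphism of Lemma \ref{lem-d} carries $H_{pQ}$ to the Hermitian structure on the parabolic determinant line bundle.

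The step requiring the most care is the isometry of the $L^2$ inner products, and behind it the isometric identification of the global $C^\infty$ sections. This rests on two facts established earlier. First, because $h$ is a conical metric of order $N$, its pullback by $f$ extends to a genuine smooth Hermitian metric on the compact curve $Y$ (as in the opening of the proof of Theorem \ref{thm1}); the ramification of order $N-1$ is exactly what absorbs the $|z|^{-2(N-1)/N}$ factor, so that the Hodge theory on the $Y$--side is the ordinary smooth theory. Second, the Hermitian--Einstein form $\widehat H$ on $(E\otimes V^0)|_{X'}$ is, by the descent in Theorem \ref{thm1} together with Proposition \ref{prop.-i}, precisely the one induced by the $\Gamma$--invariant Hermitian--Einstein metric on $W$; it is this compatibility that upgrades the pullback-and-average identification of sections from a mere linear isomorphism to an isometry. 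Once these two facts are in place the argument is a direct assembly, and no separate comparison of analytic torsions is needed, since the spectral coincidence already forces the torsion factors to agree.
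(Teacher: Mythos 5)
Your proof follows the same route as the paper: the paper's argument is precisely the isometric identification of $C^\infty(Y;W)$ and $C^\infty(Y;W\otimes\Omega^{0,1}_Y)$ with the corresponding spaces on $X$, carrying $\overline{\partial}_{(E_*\otimes V^0_*)_0}$ to $\overline{\partial}_W$ and hence $\Delta_{(E_*\otimes V^0_*)_0}$ to $\Delta_W$, so that both the $L^2$ inner products on the determinant lines and the spectra entering the Quillen torsion factors coincide. Your write-up is correct and, if anything, more explicit than the paper's (e.g.\ in invoking Theorem \ref{thm1} and Proposition \ref{prop.-i} to see that $\widehat{H}$ is the descent of the $\Gamma$--invariant Hermitian--Einstein metric on $W$), but it is essentially the same argument.
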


See \cite{BR1}, \cite{BR2} for the construction of the
Hermitian structure on the parabolic determinant line bundle.


\end{document}